\title{\textbf{Variance of resistance of "line-circle-line" graphs}}
\author{Alon Ivtsan}
\address{Alon Ivtsan,
Department of Mathematics, Ziskind Building, 
Weizmann Institute of Science, Rehovot 76100, Israel}
\email{aloniv@weizmann.ac.il}
\date{}
\DeclareMathOperator{\Var}{Var}
\newtheorem{thm}{Theorem}
\newtheorem{lem}{Lemma}
\tikzstyle{vertex}=[circle, draw, inner sep=0pt, minimum size=6pt]
\begin{document}
\maketitle

\begin{abstract}
We find 
the order of the variance of the growth model $X_{n+1}=X_n+X'_n+f\left(X''_n,X'''_n\right)$, where all the variables $X_n,X'_n,X''_n$ and $X'''_n$ are i.i.d., $X_0$ takes the values $1$ and $2$ with equal probability and $f$ is positive, monotone non-decreasing and satisfies conditions which, roughly speaking, pertain to its first and second order partial derivatives. For an appropriate choice of $f$ we obtain that the variance of the effective resistance between the endpoints of the "line-circle-line" graph $G_n$ is of order $\left(2+\frac{1}{8}+{\scriptscriptstyle\mathcal{O}}\left(1\right)\right)^n$. 
\end{abstract}

\bigskip
\bigskip

We define a sequence of recursively-defined graphs $\{G_i\}$ which we call the "line-circle-line"  sequence as follows: $G_1$ is pictured below, and $G_{k+1}$ is obtained by replacing each edge of $G_k$ by $G_1$ or, alternatively, by connecting three blocks in series: the first and last consist of $G_k$ and the middle block is two copies of $G_k$ connected in parallel. 

\bigskip
\bigskip

\resizebox{12cm}{!}{
\begin{tikzpicture}[-,>=stealth',shorten >=0.5pt,auto,node distance=2cm,
 thick,main node/.style={circle,fill=blue!10,draw,font=\sffamily\large\bfseries}]
 
\draw (0,0) circle (1);
\draw [-] (-3,0) -- (-1,0);
\draw [-] (1,0) -- (3,0);
\node at (-3.4,0) {$G_1$};

\draw (0,-3) circle (1);
\draw [-] (-3,-3) -- (-1,-3);
\draw [-] (1,-3) -- (3,-3);
\node at (-3.4,-3) {$G_2$};
\filldraw [fill=white,draw=black] (-2,-3) circle (1/3);
\filldraw [fill=white,draw=black] (2,-3) circle (1/3);
\filldraw [fill=white,draw=black] (0,-2) circle (1/3);
\filldraw [fill=white,draw=black] (0,-4) circle (1/3);


\end{tikzpicture}
}

\bigskip
\bigskip
 

In our theorem, one chooses a positive monotone non-decreasing (with respect to the partial order in $\mathbb R^2$) function $f$ which, slightly informally, satisfies technical conditions pertaining to its first and second order partial derivatives and one observes the growth model $X_{n+1}=X_n+X'_n+f\left(X''_n,X'''_n\right)$, where all the variables $X_n,X'_n,X''_n$ and $X'''_n$ are independent and identically distributed and $X_0$ takes the values $1$ and $2$ with equal probability. Our theorem tells us the order of the variance of $X_n$ and shows that a renormalized version of it converges to a normal random variable. If we select $f(t,s)=\frac{ts}{t+s}$, then $X_n$ denotes the effective resistance between the endpoints of $G_n$ (henceforth the resistance of $G_n$), and thus our theorem tells us that the variance of the resistance of $G_n$ is of order $\left(2+\frac{1}{8}+{\scriptscriptstyle\mathcal{O}}\left(1\right)\right)^n$. This result can be viewed as a special case of superconcentration as defined by Chaterjee \cite{chatterjee}. 
Probability models on similar recursively defined graphs have been studied before, see e.g. works by Hambley and Kumagai \cite{hambley-kumagai} and by Khristoforov, Kleptsyn and Triestino \cite{khristoforov}.  
The variance of the effective resistance on a different graph, the $d$-dimensional Torus, can be found in \cite{rossignol}.  We finish this paper by analysing another choice of $f$, this time $f(t,s)=t^{\frac{1}{2}}s^{\frac{1}{2}}$, since it satisfies a variant of one of the conditions of our theorem but does not satisfy the original condition of the theorem. 
 
\begin{thm}
Let $X_0$ be a random variable which takes the values $1$ and $2$ with probability one half and let $X_{n+1}$ be defined via the recurrence relation
$X_{n+1}=X_n+X'_n+f\left(X''_n,X'''_n\right)$, where all the variables $X_n,X'_n,X''_n$ and $X'''_n$ are independent and identically distributed and $f$ is a positive function on $\left[1,\infty\right)\times \left[1,\infty\right)$ with second order partial derivatives at every point which is also monotone non-decreasing in the following sense: $f\left(a,b\right)\leq f\left(c,d\right)$ whenever $a\leq c$ and $b \leq d$. Assume that the following conditions hold:
\begin{enumerate}
\item $\frac{\partial f}{\partial i}\bigg|_{\left(t,t\right)}\xrightarrow{t\to \infty} C_i$ for $i\in\{x,y\}$ and $0< C_x+C_y$ and $f(t,t)=\left(C_x+C_y\right)t$.
\item $\left(f(a_1,a_2)-f(a_3,a_4)\right)^2\leq A\left(a_1-a_3\right)^2+B\left(a_2-a_4\right)^2$ holds for $A,B\geq0$ which satisfy $A+B< C_x+C_y$ and for all $\left(a_1,a_2\right)$ and $\left(a_3,a_4\right)$ belonging to the domain $\{(s,t)|\left(2+C_x+C_y\right)^n \leq s,t \leq 2\left(2+C_x+C_y\right)^n\}$ for $n$ sufficiently large.
\item 
$\left(2+C_x+C_y\right)^{2n}\sup g^2={\scriptscriptstyle\mathcal{O}}\left(2^n\right)$ holds for $g\in\{\frac{\partial^2 f}{\partial x^2},\frac{\partial^2 f}{\partial y^2},\frac{\partial^2 f}{\partial x\partial y}\}$ and $n$ sufficiently large, where the supremum is taken over the same domain as in condition 2. 

\end{enumerate}
Then 
\begin{enumerate}
\item$\Var \left[X_n\right]=\left(2+C^2_x+C^2_y+{\scriptscriptstyle\mathcal{O}}\left(1\right)\right)^n$.
\item $\frac{X_n-\mathbb E\left[X_n\right]}{\sqrt{\Var \left[X_n\right]}}$ converges in probability to a standard normal random variable.
\item If in addition $f$ is either concave or convex then $\frac{\mathbb E\left[X_n\right]}{\left(2+C_x+C_y\right)^n}$ converges.
\end{enumerate}
\end{thm}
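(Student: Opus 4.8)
The plan is to analyze the recursion $X_{n+1}=X_n+X_n'+f(X_n'',X_n''')$ by tracking the mean $\mu_n=\mathbb E[X_n]$ and the variance $v_n=\Var[X_n]$ simultaneously, and showing that both are governed, to leading order, by the linearization of $f$ around the diagonal. First I would establish a concentration estimate: since $X_n$ is built from four i.i.d.\ copies of $X_{n-1}$ and, by induction, $\Var[X_{n-1}]$ is exponentially smaller than $\mu_{n-1}^2$, the variables $X_n'',X_n'''$ are with overwhelming probability inside a window of the form $[(2+C_x+C_y)^n,2(2+C_x+C_y)^n]$ around $\mu_n$, so that condition (1) gives $\mu_{n+1}=(2+C_x+C_y)\mu_n+o(\mu_n)$ — in fact by the diagonal-linearity assumption $f(t,t)=(C_x+C_y)t$ one gets $\mu_{n+1}=(2+C_x+C_y)\mu_n$ exactly up to the error from off-diagonal fluctuations, which condition (3) controls via a second-order Taylor expansion of $f$. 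This pins down $\mu_n=\Theta((2+C_x+C_y)^n)$ and justifies restricting attention to the domain in conditions (2)--(3) for all large $n$.

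Next I would set up the variance recursion. Writing $X_n=\mu_n+\xi_n$ with $\mathbb E[\xi_n]=0$, expand $f(X_n'',X_n''')$ to second order about $(\mu_n,\mu_n)$: the linear term contributes $C_x\xi_n''+C_y\xi_n'''$ plus a vanishing correction (using that the partials converge to $C_x,C_y$ on the diagonal and the inputs concentrate there), and the quadratic remainder is bounded in $L^2$ by $\sup|g|\cdot(\,|\xi_n''|^2+|\xi_n'''|^2\,)$ for $g$ ranging over the second partials. Taking variances and using independence of the four blocks yields
\begin{equation}
v_{n+1}=\bigl(2+C_x^2+C_y^2\bigr)v_n+E_n,
\end{equation}
where $E_n$ collects: the cross term from $X_n+X_n'$ (zero by independence), the covariance between $X_n+X_n'$ and $f(X_n'',X_n''')$ (zero by independence), and the error from the Taylor remainder. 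The remainder's variance is at most of order $(\sup|g|)^2\,\mathbb E[\xi_n^4]$, and since $\xi_n$ is a sum of four independent pieces one gets $\mathbb E[\xi_n^4]=\mathcal O(v_n^2)$ by a standard fourth-moment recursion; combined with condition (3), which says $(2+C_x+C_y)^{2n}(\sup|g|)^2=o(2^n)$, and the a priori bound $v_n=\mathcal O((2+C_x+C_y)^{2n})$ coming from $|X_n-\mu_n|\lesssim$ range, this forces $E_n=o\bigl((2+C_x^2+C_y^2)^n\bigr)$. Here condition (2) is what delivers the crude bound $v_{n+1}\le (A+B)v_n+2v_n$ with $A+B<C_x+C_y$, hence $v_n=\mathcal O((2+C_x+C_y)^{2n})$, feeding back into the remainder estimate; one also needs the lower bound $v_n\gtrsim$ something nondegenerate, which comes from the base case ($\Var[X_0]=1/4>0$) propagating through the $2+C_x^2+C_y^2\ge 2$ multiplier. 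Solving the recursion $v_{n+1}=(2+C_x^2+C_y^2)v_n+o((2+C_x^2+C_y^2)^n)$ gives part (1).

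For part (2), I would run a martingale/Lindeberg-type argument on the rescaled variable $Y_n=(X_n-\mu_n)/\sqrt{v_n}$. The recursion exhibits $Y_{n+1}$ as approximately a fixed linear combination $\alpha Y_n^{(1)}+\alpha Y_n^{(2)}+\beta Y_n^{(3)}+\gamma Y_n^{(4)}$ of four i.i.d.\ copies of $Y_n$ (with $\alpha^2+\alpha^2+\beta^2+\gamma^2=1$ after rescaling, $\beta=C_x/\!\sqrt{2+C_x^2+C_y^2}$ etc.), plus a negligible error from the Taylor remainder. A standard argument — e.g.\ via characteristic functions, $\varphi_{n+1}(t)\approx \varphi_n(\alpha t)^2\varphi_n(\beta t)\varphi_n(\gamma t)$, showing the limit must satisfy the stable-law fixed point equation with exponent $2$, which is the Gaussian — or a direct Lyapunov condition using the fourth-moment bound, gives convergence to $\mathcal N(0,1)$; the error terms are swept up using the same $o(\cdot)$ estimates from part (1). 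For part (3), convexity or concavity of $f$ makes $t\mapsto \mathbb E[f(X_n'',t)]$ and the full $\mathbb E[f(X_n'',X_n''')]$ comparable to $f(\mu_n,\mu_n)=(C_x+C_y)\mu_n$ with a one-signed Jensen error, and I would show $m_n=\mu_n/(2+C_x+C_y)^n$ is monotone (bounded, hence convergent) by controlling $\sum_n |m_{n+1}-m_n|$ via the summable bound $\mathbb E[\,|f(X_n'',X_n''')-(C_x+C_y)\mu_n|\,]/(2+C_x+C_y)^{n+1}$, which is $\mathcal O\bigl((\sup|g|)\,v_n/(2+C_x+C_y)^{n+1}\bigr)$ and summable again by condition (3).

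The main obstacle I expect is the bootstrapping in part (1): the remainder estimate needs $v_n=\mathcal O((2+C_x+C_y)^{2n})$ and $\mathbb E[\xi_n^4]=\mathcal O(v_n^2)$, but $\mathbb E[\xi_n^4]$ itself satisfies a recursion whose leading coefficient could in principle dominate, so one must carefully check that the fourth-moment multiplier stays controlled (it does, precisely because $A+B<C_x+C_y$ in condition (2) keeps the linear-in-$f$ contribution subcritical), and one must make sure the $o$-errors accumulated over $n$ steps of the recursion still sum to something of lower exponential order than $(2+C_x^2+C_y^2)^n$ — this is where the strict inequalities in conditions (2) and (3), as opposed to the non-strict ones, are essential.
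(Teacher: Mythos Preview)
Your approach is essentially the paper's: a priori variance bound from condition (2), fourth-moment control, Taylor expansion of $f$ to isolate the $(C_x^2+C_y^2)\Var[X_n]$ term, and remainder killed by condition (3); then Lyapunov CLT after unfolding the recursion, and Jensen for part (3). Two points need tightening.

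First, you do not need any probabilistic concentration to place $X_n$ in the window $[(2+C_x+C_y)^n,\,2(2+C_x+C_y)^n]$. Because $f$ is monotone and $f(t,t)=(C_x+C_y)t$, the extreme values of $X_n$ are attained when all copies of $X_0$ equal $1$ (resp.\ $2$), giving $X_n\in[(2+C_x+C_y)^n,\,2(2+C_x+C_y)^n]$ \emph{deterministically}. This is what the paper uses, and it removes the circularity in your ``by induction $\Var[X_{n-1}]\ll\mu_{n-1}^2$'' step and any need to handle bad events.

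Second, your remainder bookkeeping does not close as written. You bound $E_n\lesssim(\sup|g|)^2\,\mathbb E[\xi_n^4]\lesssim(\sup|g|)^2\,v_n^2$ and then invoke the \emph{range} bound $v_n=\mathcal O\bigl((2+C_x+C_y)^{2n}\bigr)$; that yields only $E_n=o\bigl(2^n(2+C_x+C_y)^{2n}\bigr)$, which is far larger than $(2+C_x^2+C_y^2)^n$. What is actually needed---and what the paper does---is the sharper consequence of condition (2): from $v_{n+1}\le(2+A+B)v_n$ one gets $v_n=\mathcal O\bigl((2+A+B)^n\bigr)$ with $A+B<C_x+C_y$, hence $v_n^2=o\bigl((2+C_x+C_y)^{2n}\bigr)$, and then condition (3) gives $(\sup|g|)^2v_n^2=o(2^n)=o(v_n)$ (using $v_n\ge c\,2^n$ from the trivial lower bound). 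Relatedly, your claim $\mathbb E[\xi_n^4]=\mathcal O(v_n^2)$ ``since $\xi_n$ is a sum of four independent pieces'' glosses over the nonlinearity of $f$: to get a fourth-moment recursion with subcritical leading coefficient you must apply condition (2) to $\bigl(f(a_1,a_2)-f(a_3,a_4)\bigr)^4$, obtaining a multiplier $2+A^2+B^2<4$ against the forcing term $C(\Var X_n)^2\gtrsim 4^n$, exactly as in the paper. For part (3), note that Jensen alone already gives monotonicity of $\mu_n/(2+C_x+C_y)^n$ (no summability argument required): e.g.\ if $f$ is concave then $\mathbb E[f(X_n'',X_n''')]\le f(\mu_n,\mu_n)=(C_x+C_y)\mu_n$, so $\mu_{n+1}\le(2+C_x+C_y)\mu_n$ and the ratio is decreasing and bounded below by $1$.
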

Some remarks:
\begin{enumerate}
\item Note that if $f(t,t)=Ct$, then clearly $C=C_x+C_y$, since if we write $g(t)=f(t,t)$ then $C=g'(t)=\frac{\partial f}{\partial x}\bigg|_{\left(t,t\right)}+\frac{\partial f}{\partial y}\bigg|_{\left(t,t\right)}\xrightarrow{t\to \infty} C_x+C_y\,$.
\item $C_x+C_y<1$ follows from condition 2. If we write $g(t)=f(t,t)$ then $\left(g\left(t\right)-g\left(\frac{t}{2}\right)\right)^2=\left(g'\left(s\left(t\right)\right)\right)^2\frac{t^2}{4}$ for some $s(t) \in \left[\frac{t}{2},t\right]$ and thus, by condition 2, $A+B\geq \left(g'\left(s\left(t_k\right)\right)\right)^2\xrightarrow{t_k\to \infty}\left(C_x+C_y\right)^2$ (we replace $t$ by a sequence $t_k$ which satisfies the additional domain restriction). Since by condition 2 $A+B<C_x+C_y\,$, we obtain that $C_x+C_y<1\,$.
\item If we can find constants $A$ and $B$ which satisfy $\left(f\left(a_1,a_2\right)-f\left(a_3,a_4\right)\right)^2\leq A\left(a_1-a_3\right)^2+B\left(a_2-a_4\right)^2$ but not $A+B< C_x+C_y\,$,  then condition 2 holds for $\varepsilon f$ for $\varepsilon >0$ small enough, so we can apply this theorem for $\varepsilon f$ instead of for $f$. 
\item Let $A_1$ and $B_1$ be new constants for which the inequality in condition 2 holds after taking expectation on both sides, where $a,b,c$ and $d$ are replaced by independent copies of $X_n$. Then (as can be seen in the proof) the result will still hold if we replace in condition 2 the condition $A+B<C_x+C_y$ by the conditions $A_1+B_1<C_x+C_y$ and $2+A^2+B^2<\left(2+A_1+B_1\right)^2$. We show later that this comment can be applied to the function $f(t,s)=t^{\frac{1}{2}}s^{\frac{1}{2}}$.
\item An analogous result for a function which satisfies all the above conditions apart from $f(t,t)=Ct$ can also be obtained. We can also prove the main theorem if we replace $f$ by $f_1+f_2\,$, where $f_1$ satisfies all the conditions $f$ satisfies and $f_2$ is positive and bounded.
\end{enumerate}
Examples of functions $f(t,s)$ satisfying the conditions: $\frac{ts}{t+s}$ and $c_p\left(t^p+s^p\right)^{\frac{1}{p}}$ for $p>0$ for some $c_p>0$ and $c_{\alpha}t^\alpha s^{1-\alpha}$ for $\alpha \in (0,1)$ and $c_{\alpha}>0\,$. An example which satisfies the second part of remark $5$ which is also monotone non-decreasing is $f(t,s)=\frac{1}{3}t+\frac{1}{3}s+\frac{1}{3}\sin^2(t-s)\,$.
\begin{proof}
We denote by $C_i(A,B)$ positive constants that depend only upon $A$ and $B$ and $C$ will denote an absolute constant whose value might change from line to line. From the recurrence relation of $X_n$, clearly $2\Var \left[X_n\right]\leq\Var \left[X_{n+1}\right]$ and thus $\Var \left[X_n\right]\geq C 2^n$. By applying condition 2, we obtain that for $n$ sufficiently large:
\begin{IEEEeqnarray*}{rCl}
\mathbb E\left[\left(X_{n+1}-X'_{n+1}\right)^4\right]&\leq&  \left(2+A^2+B^2\right)\mathbb E\left[\left(X_n-X'_n\right)^4\right]+C_0(A,B)\left(\Var\left[X_n\right]\right)^2 
\end{IEEEeqnarray*}
Since $(2+A^2+B^2)<2^2$ we obtain that:
\begin{IEEEeqnarray}{rCl}\label{eq:fourth_moment_0}
\mathbb E\left[\left(X_{n+1}-X'_{n+1}\right)^4\right]&\leq&  C_1(A,B)\left(\Var\left[X_n\right]\right)^2 
\end{IEEEeqnarray}
Since $f$ is monotone, the minimum and maximum values of $X_n$ are obtained by setting initial conditions $X_0=1$ and $X_0=2$ respectively, and thus, since $f(t,t)=\left(C_x+C_y\right)t\,$, $X_n$ lies between $\left(2+C_x+C_y\right)^n$ and $2\left(2+C_x+C_y\right)^n$. 
By applying condition 2, we obtain that for $n$ sufficiently large:
\begin{IEEEeqnarray*}{rCl}
\mathbb E\left[\left(X_{n+1}-X'_{n+1}\right)^2\right]\leq (2+A+B)\mathbb E\left[\left(X_n-X'_n\right)^2\right],
\end{IEEEeqnarray*}
and thus, since $A+B< C_x+C_y\,$, 
\begin{IEEEeqnarray*}{rCl}
\Var \left[X_n\right]\leq C\left(2+A+B\right)^n={\scriptscriptstyle\mathcal{O}}\left(\mathbb E \left[X_n\right]\right).
\end{IEEEeqnarray*}
Plugging this into equation \ref{eq:fourth_moment_0} yields:
\begin{IEEEeqnarray}{rCl}\label{eq:fourth_moment}
\mathbb E\left[\left(X_{n+1}-X'_{n+1}\right)^4\right]\leq C_2 (A,B)\left(2+A+B\right)^{2n}={\scriptscriptstyle\mathcal{O}}\left(\left(\mathbb E\left[X_n\right]\right)^2\right).\label{eqn:star}
\end{IEEEeqnarray}
Now, in order to simplify notation, let $a_1,a_2,a_3$ and $a_4$ be i.i.d. random variables distributed as $X_n$. Then
\begin{IEEEeqnarray}{rCl}\label{eq:variance_formula}
\Var \left[X_{n+1}\right]=2\Var \left[X_n\right]+\frac{1}{2}\mathbb E\left[\left(f(a_1,a_2)-f(a_3,a_4)\right)^2\right].
\end{IEEEeqnarray}
We estimate the second term using Taylor expansion:

\begin{IEEEeqnarray*}{rCl}
f(a_1,a_2)-f(a_3,a_4)&=&\frac{\partial f}{\partial x}\bigg|_{(b_1,b_2)}(a_1-a_3)+\frac{\partial f}{\partial y}\bigg|_{(b_1,b_2)}(a_2-a_4)\\
&=& \left(\frac{\partial f}{\partial x}\bigg|_{(b_1,b_2)}-C_x\right)(a_1-a_3)+C_x(a_1-a_3)\\
&& \text{ }+\text{ }\left(\frac{\partial f}{\partial y}\bigg|_{(b_1,b_2)}-C_y\right)(a_2-a_4)+C_y(a_2-a_4)\,.
\end{IEEEeqnarray*}

(here $(b_1,b_2)$ belongs to the convex hull of $(a_1,a_2)$ and $(a_3,a_4)\,$.) Thus, if we use the notation $A_1=\left(\frac{\partial f}{\partial x}\bigg|_{(b_1,b_2)}-C_x\right)$, $A_2=\left(\frac{\partial f}{\partial y}\bigg|_{(b_1,b_2)}-C_y\right)$, $B_1=C_x\,$, $B_2=C_y\,$, $c_1=a_1-a_3$ and $c_2=a_2-a_4$ then:

\begin{IEEEeqnarray}{rCl}\label{eq:variance_of_f}
\frac{1}{2}\mathbb E\left[\left(f(a_1,a_2)-f(a_3,a_4)\right)^2\right] &=& \left(C^2_x+C^2_y\right)\Var \left[X_n\right]\\
& & \text{ }+\text{ }\frac{1}{2}\sum_{i=1}^2\mathbb E\left[A_i^2c_i^2\right]+\sum_{i=1}^2\mathbb E\left[A_iB_ic_i^2\right] \nonumber \\
& & \text{ }+\text{ } \mathbb E\left[A_1B_2c_1c_2\right]+\mathbb E\left[A_2B_1c_1c_2\right] \nonumber \\
& & \text{ }+\text{ } \mathbb E\left[A_1A_2c_1c_2\right]. \nonumber
\end{IEEEeqnarray}

Now we show that all the terms on the right hand side of the previous equation aside from the first one are ${\scriptscriptstyle\mathcal{O}}\left(1\right)\Var \left[X_n\right]$. We bound the second term. Bounding the other terms is similar, and will follow from perhaps also applying Cauchy-Schwarz inequality. Let $\mu=\mathbb E\left[X_n\right]\,$. Then,
\begin{IEEEeqnarray*}{rCl}
\mathbb E\left[A_1^2c_1^2\right] &\leq& 2\mathbb E\left[\left(\frac{\partial f}{\partial x}\bigg|_{(b_1,b_2)}-\frac{\partial f}{\partial x}\bigg|_{(\mu,\mu)}\right)^2(a_1-a_3)^2\right]\\
&& { }+\;2\left(\frac{\partial f}{\partial x}\bigg|_{(\mu,\mu)}-C_x\right)^2\mathbb E\left[(a_1-a_3)^2\right].
\end{IEEEeqnarray*} 
The second term on the right hand side is ${\scriptscriptstyle\mathcal{O}}\left(1\right)\Var \left[X_n\right]$ by condition 1. Thus, it suffices to bound the first term on the right hand side. By Taylor's expansion:
\begin{IEEEeqnarray*}{rCl}
\frac{\partial f}{\partial x}\bigg|_{(b_1,b_2)}-\frac{\partial f}{\partial x}\bigg|_{(\mu,\mu)} &=& \frac{\partial^2 f}{\partial x^2}\bigg|_{(d_1,d_2)}(b_1-\mu)+\frac{\partial^2 f}{\partial x \partial y}\bigg|_{(d_1,d_2)}(b_2-\mu)\,.
\end{IEEEeqnarray*}  
(here $(d_1,d_2)$ belonging to the convex hull of $(b_1,b_2)$ and $(\mu,\mu)\,$.) Since $b_i$ is a convex combination of $a_i$ and $a_{i+2}$ for $i=1,2$ the inequality $(b_i-\mu)^2 \leq (a_i-\mu)^2+(a_{i+2}-\mu)^2$ holds. To simplify notation, let $A=\left(2+C_x+C_y\right)^n$, $B=\frac{\partial f}{\partial x}\bigg|_{(b_1,b_2)}-\frac{\partial f}{\partial x}\bigg|_{(\mu,\mu)}$ and $c_1=a_1-a_3$. Thus, by Cauchy-Schwarz, condition 3 and the inequality 
\begin{IEEEeqnarray}{rCl}\label{4-moment-easy}
2\mathbb E\left[\left(X_n-\mu\right)^4\right]\leq \mathbb E\left[\left(a_1-a_3\right)^4\right]\leq 16\mathbb E\left[\left(X_n-\mu\right)^4\right]:
\end{IEEEeqnarray} 
\begin{IEEEeqnarray*}{rCl}
\mathbb E\left[B^2c_1^2\right] &\leq& C_0\sup_{A \leq t,s\leq 2A} \left[\left(\frac{\partial^2 f}{\partial x^2}\bigg|_{(s,t)}\right)^2+\left(\frac{\partial^2 f}{\partial x \partial y}\bigg|_{(s,t)}\right)^2\right]\mathbb E\left[\left(a_1-a_3\right)^4\right] \\
&\stackrel{(*)}{\leq} & C_1A^2\sup_{A \leq t,s \leq 2A} \left[\left(\frac{\partial^2 f}{\partial x^2}\bigg|_{(s,t)}\right)^2+\left(\frac{\partial^2 f}{\partial x \partial y}\bigg|_{(s,t)}\right)^2\right] \\
& = & {\scriptscriptstyle\mathcal{O}}\left(2^n\right)={\scriptscriptstyle\mathcal{O}}\left(1\right)\Var \left[X_n\right].
\end{IEEEeqnarray*} 
The inequality marked by $(*)$ follows from the fourth moment bound (see equation (\ref{eq:fourth_moment})).
Thus, if we combine the last inequality with equations (\ref{eq:variance_formula}) and (\ref{eq:variance_of_f}) we obtain:
\begin{IEEEeqnarray*}{rCl}
\Var \left[X_{n+1}\right]=\left(2+C_x^2+C_y^2+{\scriptscriptstyle\mathcal{O}}\left(1\right)\right)\Var \left[X_n\right],
\end{IEEEeqnarray*}
which completes the proof of the first part.

In order to show the second part we write the recursion relation (here we use the notation $\mu_n=\mathbb E\left[X_n\right]$):

\begin{IEEEeqnarray*}{rCl}
\frac{X_{n+1}-\mu_{n+1}}{\sqrt{\Var \left[X_{n+1}\right]}} &=& \frac{X_n-\mu_n}{\sqrt{\Var \left[X_n\right]}}\sqrt{\frac{\Var \left[X_n\right]}{\Var \left[X_{n+1}\right]}}+\frac{X_n^{'}-\mu_n}{\sqrt{\Var \left[X_n\right]}}\sqrt{\frac{\Var \left[X_n\right]}{\Var \left[X_{n+1}\right]}}+\\
&& \frac{f\left(X_n^{''},X_n^{'''}\right)-\mathbb E\left[f\left(X_n^{''},X_n^{'''}\right)\right]}{\sqrt{\Var \left[X_n\right]}}\sqrt{\frac{\Var \left[X_n\right]}{\Var \left[X_{n+1}\right]}} \nonumber
\end{IEEEeqnarray*} 

We plug into this the Taylor expansion of $f$ (here $(a,b)$ is in the convex hull of $\left(X_n^{''},X_n^{'''}\right)$ and $\left(\mu_n,\mu_n\right)$):
 
\begin{IEEEeqnarray*}{rCl}
f\left(X_n^{''},X_n^{'''}\right)&=&f\left(\mu_n,\mu_n\right)+\frac{\partial f}{\partial x}\bigg|_{(\mu_n,\mu_n)}\left(X_n^{''}-\mu_n\right)+\frac{\partial f}{\partial y}\bigg|_{(\mu_n,\mu_n)}\left(X_n^{'''}-\mu_n\right)\\
&& \text{ }+\text{ }\frac{1}{2}\frac{\partial^2 f}{\partial x^2}\bigg|_{(a,b)}\left(X_n^{''}-\mu_n\right)^2+\frac{1}{2}\frac{\partial^2 f}{\partial y^2}\bigg|_{(a,b)}\left(X_n^{'''}-\mu_n\right)^2\\
&& \text{ }+\text{ }\frac{1}{2}\frac{\partial^2 f}{\partial x\partial y}\bigg|_{(a,b)}\left(X_n^{''}-\mu_n\right)\left(X_n^{'''}-\mu_n\right)\,. 
\end{IEEEeqnarray*} 

Now we let $Y_n=\frac{X_n-\mu_n}{\sqrt{\Var \left[X_n\right]}}$ and apply the recursion relation $m$  times to obtain (here $Y_{n-m}^{(i)}$ are independent copies of $Y_{n-m}$):

\begin{IEEEeqnarray*}{rCl}
Y_n=\sum_{i=1}^{4^m}a_{i,n,m}Y^{(i)}_{n-m}+A_{n,m} 
\end{IEEEeqnarray*} 

By condition 1 and the first part of the theorem we obtain that $\sum_{i=1}^{4^m}a^2_{i,n,m}\rightarrow 1$ as $n \rightarrow \infty$. By condition 3 we can show that $\mathbb E\left[\lvert A_{m,n}\rvert\right]\leq \epsilon(n)g(m)$ where $\epsilon(n)$ goes to zero as $n$ grows and $g(m)$ grows in $m$. Thus if $Z$ is a standard normal random variable and $Z_{m,n}$ is a normal random variable with mean zero and variance $\sum_{i=1}^{4^m}a^2_{i,n,m}$ then:
 for each $\delta>0$
\begin{IEEEeqnarray*}{rCl}
\mathbb P\left(\lvert Y_n-Z \rvert > \delta\right)&\leq &  \mathbb P\left(\lvert \sum_{i=1}^{4^m}a_{i,n,m}Y^{(i)}_{n-m}-Z_{m,n} \rvert > \frac{\delta}{3}\right)+\mathbb P\left(\lvert Z_{m,n}-Z \rvert > \frac{\delta}{3}\right)\\
&& \text{ }+\text{ }\mathbb P\left(\lvert A_{m,n} \rvert > \frac{\delta}{3}\right)\\
&\leq & \varepsilon_1(m)+\varepsilon_2(n)+\mathbb E\left[\lvert A_{m,n}\rvert\right]\frac{3}{\delta}\\
&\leq & \varepsilon_1(m)+\varepsilon_2(n)+\epsilon_3(n)g(m)\frac{3}{\delta}
\end{IEEEeqnarray*} 

Here the notation $\varepsilon(n)$ means that $\varepsilon$ tends to zero as $n$ tends to infinity. Taking $n,m$ to infinity at the appropriate rate yields the result. Note that in order to bound $\mathbb P\left(\lvert \sum_{i=1}^{4^m}a_{i,n,m}Y^{(i)}_{n-m}-Z_{m,n} \rvert > \frac{\delta}{3}\right)$ we applied Lyaponov CLT with $\delta=2$ which can be used since 

\begin{IEEEeqnarray*}{rCl}
\frac{\sum_{i=1}^{4^m}a^4_{i,n,m}}{\left(\sum_{i=1}^{4^m}a^2_{i,n,m}\right)^2}\leq \frac{\max_i a^2_{i,n,m}}{\sum_{i=1}^{4^m}a^2_{i,n,m}}=\frac{\Pi _{k=n-m}^{n-1}\frac{\Var \left[ X_k\right]}{\Var \left[X_{k+1}\right]}}{\sum_{i=1}^{4^m}a^2_{i,n,m}}\rightarrow 0
\end{IEEEeqnarray*} 

as $m$ tends to infinity (the last equality holds for $n-m$ sufficiently large as can be proven by induction for fixed $n$ and increasing $m$) and in addition $\mathbb E\left[Y^4_{n-m}\right]$ is bounded by \ref{eq:fourth_moment_0} and \ref{4-moment-easy}.

The third part follows from Jensen's inequality. The limit, which lies between $1$ and $2$ by the remark at the beginning of the proof, can be approximated using Taylor expansion (see examples of specific functions below).

\end{proof}

\section*{Analysis of specific functions}

We start with the function $f_1(t,s)=\frac{ts}{t+s}\,$. 
$X_n$ has a probability interpretation in this case - it denotes the effective resistance between the endpoints of the "line-circle-line" sequence of graphs, whose definition appears in the introduction to this paper. 

For $f_1$ we have $\frac{\partial f_1}{\partial t}=\frac{s^2}{(t+s)^2}$ and $\frac{\partial f_1}{\partial s}=\frac{t^2}{(t+s)^2}\,$, and thus $C_x=C_y=\frac{1}{4}$ and $f_1(t,t)=\frac{t}{2}=\left(C_x+C_y\right)t\,$. We need one of the next two lemmas.

\begin{lem}\label{square_bound}
For all positive numbers $a_1$,$a_2$,$a_3$ and $a_4$ satisfying $1/2\leq\frac{a_i}{a_j}\leq 2$ for $i,j\in\{1,2,3,4\}$ we have
\begin{IEEEeqnarray*}{rCl}
\left(f_1\left(a_1,a_2\right)-f_1\left(a_3,a_4\right)\right)^2\leq \frac{20}{81}\left(\left(a_1-a_3\right)^2+\left(a_2-a_4\right)^2\right).
\end{IEEEeqnarray*}
\end{lem}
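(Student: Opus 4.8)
The plan is to bound the left-hand side by writing $f_1(a_1,a_2)-f_1(a_3,a_4)$ as a gradient dotted into the displacement vector and controlling the gradient on the relevant region. Precisely, by the mean value theorem in two variables there is a point $(b_1,b_2)$ on the segment between $(a_1,a_2)$ and $(a_3,a_4)$ with
\begin{IEEEeqnarray*}{rCl}
f_1(a_1,a_2)-f_1(a_3,a_4)=\frac{\partial f_1}{\partial x}\bigg|_{(b_1,b_2)}(a_1-a_3)+\frac{\partial f_1}{\partial y}\bigg|_{(b_1,b_2)}(a_2-a_4).
\end{IEEEeqnarray*}
By Cauchy--Schwarz this gives
\begin{IEEEeqnarray*}{rCl}
\left(f_1(a_1,a_2)-f_1(a_3,a_4)\right)^2\leq\left(\left(\tfrac{\partial f_1}{\partial x}\big|_{(b_1,b_2)}\right)^2+\left(\tfrac{\partial f_1}{\partial y}\big|_{(b_1,b_2)}\right)^2\right)\left((a_1-a_3)^2+(a_2-a_4)^2\right),
\end{IEEEeqnarray*}
so it suffices to show that $\left(\tfrac{\partial f_1}{\partial x}\right)^2+\left(\tfrac{\partial f_1}{\partial y}\right)^2\leq\frac{20}{81}$ at every point $(b_1,b_2)$ with $1/2\leq b_i/b_j\leq 2$; note the MVT point inherits the ratio constraint because it lies in the convex hull of points all of whose coordinates are within a factor of $2$ of one another.

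Next I would carry out this single-point estimate. Since $\tfrac{\partial f_1}{\partial x}=\tfrac{s^2}{(t+s)^2}$ and $\tfrac{\partial f_1}{\partial y}=\tfrac{t^2}{(t+s)^2}$, the quantity to maximize is homogeneous of degree $0$, so I can set $r=t/s\in[1/2,2]$ and consider
\begin{IEEEeqnarray*}{rCl}
h(r)=\frac{1+r^4}{(1+r)^4}.
\end{IEEEeqnarray*}
A routine derivative computation shows $h$ is decreasing on $[1/2,1]$ and increasing on $[1,2]$ (equivalently $h$ is symmetric under $r\mapsto 1/r$ and has its minimum at $r=1$), so its maximum on $[1/2,2]$ is attained at the endpoints, where $r=2$ (or $r=1/2$) gives $h(2)=\frac{1+16}{81}=\frac{17}{81}$. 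Since $\frac{17}{81}\leq\frac{20}{81}$, this finishes the proof, and in fact gives the sharper constant $\frac{17}{81}$; I would state the weaker $\frac{20}{81}$ bound as in the lemma, since that is all that is needed downstream (one checks $\frac{20}{81}<\frac{1}{2}=C_x+C_y$, so condition 2 of the theorem is satisfied with $A=B=\frac{20}{81}$ on the relevant dyadic domains, after noting that on the domain $\{(2+C_x+C_y)^n\leq s,t\leq 2(2+C_x+C_y)^n\}$ all ratios $a_i/a_j$ indeed lie in $[1/2,2]$).

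The main obstacle, such as it is, is purely the elementary optimization of $h(r)$ on $[1/2,2]$ — verifying that the maximum is at the endpoints rather than at an interior critical point. One clean way to avoid sign-chasing in $h'$ is to substitute $u=r+1/r\in[2,\tfrac52]$ and rewrite $\frac{1+r^4}{(1+r)^4}$ as a rational function of $u$, which is then manifestly monotone on $[2,\tfrac52]$; alternatively one simply evaluates $h'$ and notes its numerator factors as $(r-1)$ times a polynomial that is positive on the interval. Either way the computation is short and the inequality is comfortably slack, so there is no real danger of the bound failing.
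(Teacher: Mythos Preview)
Your proof is correct. Both you and the paper start from the same mean value step, writing the increment as $\nabla f_1(b_1,b_2)\cdot(a_1-a_3,a_2-a_4)$ and then checking that the intermediate point inherits the ratio constraint $\tfrac12\le b_1/b_2\le 2$.

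The divergence is in how the square is bounded. The paper factors the square using convexity of $x\mapsto x^2$ to obtain
\[
\left(f_1(a_1,a_2)-f_1(a_3,a_4)\right)^2\le \frac{b_1^2+b_2^2}{(b_1+b_2)^2}\left(\frac{b_2^2}{(b_1+b_2)^2}(a_1-a_3)^2+\frac{b_1^2}{(b_1+b_2)^2}(a_2-a_4)^2\right),
\]
and then bounds the two factors separately on $x=\tfrac{b_1}{b_1+b_2}\in[\tfrac13,\tfrac23]$, getting $\tfrac{5}{9}\cdot\tfrac{4}{9}=\tfrac{20}{81}$. You instead apply Cauchy--Schwarz directly, reducing to the single-variable maximization of $h(r)=\tfrac{1+r^4}{(1+r)^4}$ on $[\tfrac12,2]$; since $h'(r)=\tfrac{4(r^3-1)}{(1+r)^5}$ changes sign only at $r=1$, the maximum is $h(2)=\tfrac{17}{81}$. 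Your route is shorter and already delivers the sharper constant $\tfrac{17}{81}$ that the paper establishes separately (Lemma~\ref{better_square_bound}) by a more involved parametrization argument; the price is that your inequality places the same constant in front of both $(a_1-a_3)^2$ and $(a_2-a_4)^2$, whereas the paper's factorization could in principle give asymmetric coefficients---though that extra flexibility is not exploited here.
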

\begin{proof}
By first order Taylor expansion:
\begin{IEEEeqnarray*}{rCl}
f_1\left(a_1,a_2\right)-f_1\left(a_3,a_4\right) & = & \frac{\partial f_1}{\partial x}\bigg|_{\left(b_1,b_2\right)} \left(a_1-a_3\right)+\frac{\partial f_1}{\partial y}\bigg|_{\left(b_1,b_2\right)}\left(a_2-a_4\right)\\
&=& \left( \frac{b_2}{b_1+b_2}\right)^2\left(a_1-a_3\right)+\left( \frac{b_1}{b_1+b_2}\right)^2\left(a_2-a_4\right) .
\end{IEEEeqnarray*}
Here $\left(b_1,b_2\right)$ is in the convex hull of $\left(a_1,a_2\right)$ and $\left(a_3,a_4\right)\,$. If we write $b_i=\lambda a_i+\left(1-\lambda\right)a_{i+2}$ for $i=1,2$ then:
\begin{IEEEeqnarray*}{rCl}
b_1=\lambda a_1+\left(1-\lambda\right)a_3\leq \lambda 2a_2+\left(1-\lambda\right)2a_4=2b_2
\end{IEEEeqnarray*}
and similarly $b_2\leq 2b_1\,$. Thus, if we also apply the convexity of $g\left(x\right)=x^2$ we obtain:
\begin{IEEEeqnarray*}{rCl}
\left(f_1\left(a_1,a_2\right)-f_1\left(a_3,a_4\right)\right)^2 & = & \left(\left( \frac{b_2}{b_1+b_2}\right)^2\left(a_1-a_3\right)+\left( \frac{b_1}{b_1+b_2}\right)^2\left(a_2-a_4\right)\right)^2\\
& = & \frac{\left(b_1^2+b_2^2\right)^2}{\left(b_1+b_2\right)^4} \left(\frac{b_2^2}{b_1^2+b_2^2}\left(a_1-a_3\right)+\frac{b_1^2}{b_1^2+b_2^2}\left(a_2-a_4\right)\right)^2 \\
& \leq & \frac{b_1^2+b_2^2}{\left(b_1+b_2\right)^2}\times \\
&& \left(\frac{b_2^2}{\left(b_1+b_2\right)^2}\left(a_1-a_3\right)^2+\frac{b_1^2}{\left(b_1+b_2\right)^2}\left(a_2-a_4\right)^2\right) .
\end{IEEEeqnarray*}
Let $x=\frac{b_1}{b_1+b_2}\,$. The inequality $\frac{b_i}{b_j}\leq 2$ for $i,j\in\{1,2\}$ implies $\frac{1}{3}\leq\ x \leq \frac{2}{3}\,$. Thus $\frac{b_1^2+b_2^2}{\left(b_1+b_2\right)^2}=x^2+(1-x)^2$. Optimizing over the domain of $x$ yields $\frac{b_1^2+b_2^2}{\left(b_1+b_2\right)^2}\leq \frac{5}{9}\,$. Plugging in the bounds completes the first part of proof. 
\end{proof}

We can achieve a better bound than the one in the second part of Lemma \ref{square_bound} by a stronger optimization argument.

\begin{lem}\label{better_square_bound}
For all positive numbers $a_1$,$a_2$,$a_3$ and $a_4$ satisfying $1/2\leq\frac{a_i}{a_j}\leq 2$ for $i,j\in\{1,2,3,4\}$ we have
\begin{IEEEeqnarray*}{rCl}
\left(f_1\left(a_1,a_2\right)-f_1\left(a_3,a_4\right)\right)^2\leq \frac{17}{81}\left(\left(a_1-a_3\right)^2+\left(a_2-a_4\right)^2\right).
\end{IEEEeqnarray*}
\end{lem}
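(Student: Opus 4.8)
The plan is to follow the opening of the proof of Lemma \ref{square_bound} verbatim up to the point where the difference $f_1(a_1,a_2)-f_1(a_3,a_4)$ is written through a single parameter, and then to replace the two-stage estimate there (convexity of $t\mapsto t^2$ followed by a separate supremum bound) by a single application of the Cauchy--Schwarz inequality, which turns out to be strictly sharper.

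First I would invoke first-order Taylor expansion exactly as before: there is a point $(b_1,b_2)$ in the convex hull of $(a_1,a_2)$ and $(a_3,a_4)$ with $f_1(a_1,a_2)-f_1(a_3,a_4)=\left(\tfrac{b_2}{b_1+b_2}\right)^2(a_1-a_3)+\left(\tfrac{b_1}{b_1+b_2}\right)^2(a_2-a_4)$. Writing $b_i=\lambda a_i+(1-\lambda)a_{i+2}$ and using $1/2\le a_i/a_j\le 2$, the same computation as in Lemma \ref{square_bound} gives $1/2\le b_i/b_j\le 2$, hence $x:=\tfrac{b_1}{b_1+b_2}\in[1/3,2/3]$, and the expression above equals $(1-x)^2c_1+x^2c_2$ with $c_1=a_1-a_3$ and $c_2=a_2-a_4$.

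Then I would square and apply Cauchy--Schwarz to the two-term sum directly, with coefficient vector $\big((1-x)^2,x^2\big)$, obtaining $\big((1-x)^2c_1+x^2c_2\big)^2\le\big((1-x)^4+x^4\big)(c_1^2+c_2^2)$. It remains to bound $p(x):=(1-x)^4+x^4$ on $[1/3,2/3]$. Since $p''(x)=12x^2+12(1-x)^2\ge 0$, the function $p$ is convex, and it is symmetric about $x=1/2$, so its maximum on the interval is attained at an endpoint, where $p(1/3)=p(2/3)=(2/3)^4+(1/3)^4=\tfrac{17}{81}$. Combining the two displays gives the claimed inequality.

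The only real point — and it is exactly what produces the improvement over $20/81$ — is recognizing that applying Cauchy--Schwarz in one stroke to the coefficient vector $\big((1-x)^2,x^2\big)$ is tighter than first passing to the weights $\tfrac{b_i^2}{b_1^2+b_2^2}$ via convexity of $t^2$ and then maximizing the product $(x^2+(1-x)^2)\max\{x^2,(1-x)^2\}$ separately; beyond this observation there is no computational obstacle other than the elementary optimization of $p$. One may further note that Cauchy--Schwarz is sharp here (equality when $(c_1,c_2)$ is parallel to $\big((1-x)^2,x^2\big)$), so $\tfrac{17}{81}$ is the best constant obtainable by this route, although the lemma asserts only the upper bound.
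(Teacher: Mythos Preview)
Your proof is correct and reaches the same constant $\tfrac{17}{81}$, but by a different and more economical route than the paper. After the common Taylor-expansion setup, the paper parametrizes the direction of $(c_1,c_2)$ by writing $c_2=mc_1$ with $|m|\le 1$ (using the symmetry $f_1(s,t)=f_1(t,s)$ to arrange this), bounds $\varphi_m(x)=x^2+m(1-x)^2\le(4+m)/9$ on $[1/3,2/3]$, and then maximizes $\tfrac{1}{81}\,\tfrac{(4+m)^2}{1+m^2}$ over $m\in[0,1]$, locating the maximum at $m=1/4$. Your single Cauchy--Schwarz step collapses this two-stage optimization: it replaces the supremum over the auxiliary parameter $m$ by the exact value $(1-x)^4+x^4$, leaving only the elementary one-variable maximization over $x$. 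The paper's route has the minor advantage of making the extremal configuration explicit (the value $m=1/4$ is what produces the tightness example $a_2=2$, $a_4=2-\varepsilon/4$, $a_1=1+\varepsilon$, $a_3=1$ given at the end of its proof); your approach, on the other hand, avoids the case split on which of $|c_1|,|c_2|$ is larger and the accompanying symmetry reduction, and is overall shorter.
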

\begin{proof}
The inequality obviously holds whenever $a_1=a_3$ and $a_2=a_4\,$. Thus, we can write $\left(a_1-a_3\right)=m\left(a_2-a_4\right)$ or $\left(a_2-a_4\right)=m\left(a_1-a_3\right)$ for some $\left|m\right|\leq 1\,$. We may assume that $\left(a_2-a_4\right)=m\left(a_1-a_3\right)$ (if not since $f_1(t,s)=f_1(s,t)$ we can replace $a_1\leftrightarrow a_2$ and $a_3\leftrightarrow a_4$). Thus,
\begin{IEEEeqnarray*}{rCl}
\left(f_1\left(a_1,a_2\right)-f_1\left(a_3,a_4\right)\right)^2 & = & \left(\left( \frac{b_2}{b_1+b_2}\right)^2\left(a_1-a_3\right)+\left( \frac{b_1}{b_1+b_2}\right)^2\left(a_2-a_4\right)\right)^2\\
&=& \left(\left( \frac{b_2}{b_1+b_2}\right)^2+m\left( \frac{b_1}{b_1+b_2}\right)^2\right)^2\left(a_1-a_3\right)^2\\
& = & \left(\left( \frac{b_2}{b_1+b_2}\right)^2+m\left( \frac{b_1}{b_1+b_2}\right)^2\right)^2\times \\
&& \frac{1}{1+m^2}\left(\left(a_1-a_3\right)^2+\left(a_2-a_4\right)^2\right) .
\end{IEEEeqnarray*}
It suffices to take $0\leq m \leq 1$ since 
\begin{IEEEeqnarray*}{rCl}
\left(\left( \frac{b_2}{b_1+b_2}\right)^2+m\left( \frac{b_1}{b_1+b_2}\right)^2\right)^2 & \leq & \left(\left( \frac{b_2}{b_1+b_2}\right)^2+\left|m\right|\left( \frac{b_1}{b_1+b_2}\right)^2\right)^2.
\end{IEEEeqnarray*}
Let $x=\frac{b_2}{b_1+b_2}\,$. Then, as shown in the previous lemma, $x\in\left[\frac{1}{3},\frac{2}{3}\right]\,$. Since for each $m$ the function $\varphi_m\left(s\right)=s^2+m\left(1-s\right)^2$ is bounded by $4/9+\left(1/9\right)m$ whenever $1/3\leq s\leq 2/3\,$, we conclude that:
\begin{IEEEeqnarray*}{rCl}
\left(f_1\left(a_1,a_2\right)-f_1\left(a_3,a_4\right)\right)^2 & \leq & \frac{1}{81}\frac{\left(4+m\right)^2}{1+m^2}\left(\left(a_1-a_3\right)^2+\left(a_2-a_4\right)^2\right).
\end{IEEEeqnarray*}
Note that the function $\phi\left(m\right)=\frac{\left(4+m\right)^2}{1+m^2}$ achieves it's maximum in the interval $[0,1]$ at $m=1/4$ which completes the proof. The bound is tight, which we can see if we set $a_2=2\,$, $a_4=2-\varepsilon/4\,$, $a_1=1+\varepsilon$ and $a_3=1$ and let $\varepsilon\searrow 0\,$.
\end{proof}
If we apply (for instance) Lemma \ref{better_square_bound} we obtain $A=B=\frac{17}{81}$ and thus $A+B=\frac{34}{81}<\frac{1}{2}=C_x+C_y$. The other conditions are easy to verify and follow from the second moment terms $\frac{\partial ^2 f_1}{\partial t^2}=(-2)\frac{s^2}{(t+s)^3}\,$, $\frac{\partial ^2 f_1}{\partial t \partial s}=2\frac{ts}{(t+s)^3}$ and $\frac{\partial ^2 f_1}{\partial s^2}=(-2)\frac{t^2}{(t+s)^3}\,$.

\section*{Expectation bounds for $X_n$ for the choice $f_1=\frac{ts}{t+s}$}
We show lower and upper expectation bounds for $X_n$.
The upper bound follows from applying to the recurrence relation the fact that the harmonic mean is bounded above by the arithmetic mean (or by concavity of $f$). This yields $\mathbb E\left(X_{n+1}\right)\leq 2.5\mathbb E\left(X_n\right)$ and also that $\frac{1}{2.5^n}\mathbb E\left(X_n\right)$ converges. Now if one calculates the value of $\mathbb E\left(X_i\right)$ for some $i$ explicitly one obtains an upper bound. The simple calculation of $\mathbb E\left(X_0\right)$ yields $\mathbb E\left(X_n\right)\leq 1.5\times 2.5^n$. The calculation of $\mathbb E\left(X_1\right)$ only provides a slight improvement: $\mathbb E\left(X_n\right)\leq \frac{89}{60}\times 2.5^n$. For the lower bound we note that $\left|\frac{\partial ^2 f_1}{\partial t^2}\right|=\left|2\frac{s^2}{(t+s)^3}\,\right|\leq \frac{8}{27}\frac{1}{2.5^n}$, $\left|\frac{\partial ^2 f_1}{\partial t \partial s}\right|=\left|2\frac{ts}{(t+s)^3}\right|\leq \frac{1}{4}\frac{1}{2.5^n}$ and $\left|\frac{\partial ^2 f_1}{\partial s^2}\right|=\left|2\frac{t^2}{(t+s)^3}\right|\leq \frac{8}{27}\frac{1}{2.5^n}$ whenever $2.5^n\leq s,t \leq 2\times2.5^n$. Thus, for every $2.5^n\leq s,t,a \leq 2\times2.5^n$:

\begin{IEEEeqnarray*}{rCl}
f_1\left(t,s\right)& \geq & f_1\left(b,b\right)+\frac{1}{4}\left(t-b\right)+\frac{1}{4}\left(s-b\right)\\
&& -\frac{4}{27}\frac{1}{2.5^n}\left(t-b\right)^2-\frac{4}{27}\frac{1}{2.5^n}\left(s-b\right)^2-\frac{1}{8}\frac{1}{2.5^n}\left|t-b\right|\left|s-b\right|. 
\end{IEEEeqnarray*}

Thus if we replace $t$ and $s$ by two independent copies of $X_n$ and $b$ by $\mathbb E\left[X_n\right]$ and apply Lemma \ref{better_square_bound} we obtain:

\begin{IEEEeqnarray*}{rCl}
\mathbb E\left[X_{n+1}\right]&\geq & 2.5\mathbb E\left[X_n\right]-\frac{91}{216}\frac{1}{2.5^n}\Var\left[X_n\right]\\
& \geq & 2.5\mathbb E\left[X_n\right]-\frac{91}{216}\frac{\left(2+\frac{17}{81}\right)^n}{2.5^n}\Var\left[X_0\right]\\
& \geq & 2.5^{n+1}\mathbb E\left[X_0\right]-\frac{91}{216}\Var\left[X_0\right]\sum_{k=0}^{n} 2.5^k \frac{\left(2+\frac{17}{81}\right)^{n-k}}{2.5^{n-k}}\\
& \geq & 2.5^{n+1}\left[\mathbb E\left[X_0\right]-\frac{91}{216}\frac{2.5}{2.5^2-2-\frac{17}{81}}\Var\left[X_0\right]\right]
\end{IEEEeqnarray*}

By plugging in the values of $\Var\left[X_0\right]$ and $\mathbb E\left[X_0\right]$ we obtain $\mathbb E\left[X_n\right] \geq 1.43 \times 2.5^n$.

\section*{Another example - $f_2(t,s)=t^{\frac{1}{2}}s^{\frac{1}{2}}$}
We now turn to $f_2(t,s)=t^{\frac{1}{2}}s^{\frac{1}{2}}$. As mentioned before, this example satisfies a variant of condition $2$ of our theorem presented in remark $4$ but does not satisfy condition $2$ itself. First, $\frac{\partial f_2}{\partial t}=\frac{1}{2}t^{-\frac{1}{2}}s^{\frac{1}{2}}$ and $\frac{\partial f_2}{\partial s}=\frac{1}{2}s^{-\frac{1}{2}}t^{\frac{1}{2}}$, and thus $C_x=C_y=\frac{1}{2}\,$. By the recurrence relation:

\begin{IEEEeqnarray*}{rCl}
\mathbb E \left[X_{n+1}\right] &=& 2\mathbb E \left[X_n\right]+\mathbb E \left[\sqrt{X_n}\sqrt{X'_n}\right] \\
&=& 2\mathbb E \left[X_n\right]+\left( \mathbb E \left[\sqrt{X_n}\right] \right)^2 \\
&\leq& 3\mathbb E \left[X_n\right] 
\end{IEEEeqnarray*}
and thus $\mathbb E \left[X_n\right]\leq 3^n \mathbb E \left[X_0\right]=1.5\times 3^n$ (and also $\mathbb E \left[X_n\right]\leq 3^{n-1} \mathbb E \left[X_1\right]\leq 1.49\times 3^n$) and also $\frac{1}{3^n}\mathbb E\left(X_n\right)$ converges. Recall also that, as shown in the main theorem, all independent copies of $X_n$ lie between $3^n$ and $2\times 3^n$.
Let $a_1,a_2,a_3$ and $a_4$ be i.i.d. distributed as $X_n$. By Taylor expansion:

\begin{IEEEeqnarray*}{rCl}
\left(f_2\left(a_1,a_2\right)-f_2\left(a_3,a_4\right)\right)^2&=&\left(\frac{\partial f_1}{\partial x}\bigg|_{\left(b_1,b_2\right)}\left(a_1-a_3\right)+\frac{\partial f_1}{\partial y}\bigg|_{\left(b_1,b_2\right)}\left(a_2-a_4\right)\right)^2\\
&=& \left(\frac{1}{2}b_1^{-\frac{1}{2}}b_2^{\frac{1}{2}}\left(a_1-a_3\right)+\frac{1}{2}b_1^{\frac{1}{2}}b_2^{-\frac{1}{2}}\left(a_2-a_4\right)\right)^2 \\
&=& \frac{1}{4}\left[\frac{b_2}{b_1}\left(a_1-a_3\right)^2+\frac{b_1}{b_2}\left(a_2-a_4\right)^2\right]\\
&& +\text{ }\frac{1}{4}\left(a_1-a_3\right)\left(a_2-a_4\right).
\end{IEEEeqnarray*}
(here $\left(b_1,b_2\right)$ belongs to the convex hull of $\left(a_1,a_2\right)$ and $\left(a_3,a_4\right)\,$.) By applying the same technique in the analysis of $f_1$ we obtain that $\frac{b_i}{b_j}\leq 2$ for $i,j\in\{1,2\}\,$. Thus,

\begin{IEEEeqnarray*}{rCl}
\left(f_2\left(a_1,a_2\right)-f_2\left(a_3,a_4\right)\right)^2&\leq&\frac{1}{4}\left[2\left(a_1-a_3\right)^2+2\left(a_2-a_4\right)^2+\left(a_1-a_3\right)\left(a_2-a_4\right)\right]\\
&\leq & \frac{5}{8}\left(a_1-a_3\right)^2+\frac{5}{8}\left(a_2-a_4\right)^2
\end{IEEEeqnarray*}
and
\begin{IEEEeqnarray*}{rCl}
\mathbb E \left[\left(f_2\left(a_1,a_2\right)-f_2\left(a_3,a_4\right)\right)^2\right]&=&
\frac{1}{4}\left[\mathbb E\left[\frac{b_2}{b_1}\left(a_1-a_3\right)^2\right]+\mathbb E\left[\frac{b_1}{b_2}\left(a_2-a_4\right)^2\right]\right]\\
&& +\text{ }\frac{1}{4}\mathbb E\left[a_1-a_3\right]\mathbb E\left[a_2-a_4\right]\\
&=& \frac{1}{4}\mathbb E\left[b_2\right]\mathbb E\left[b_1^{-1}\left(a_1-a_3\right)^2\right]\\
&& +\frac{1}{4}\mathbb E\left[b_1\right]\mathbb E\left[b_2^{-1}\left(a_2-a_4\right)^2\right]+\text{ }0\\
&\leq& \frac{1}{4}\times1.5\times 3^n\times \frac{1}{3^n}\mathbb E\left[\left(a_1-a_3\right)^2\right]\\
&& +\text{ }\frac{1}{4}\times 1.5\times 3^n\times \frac{1}{3^n}\mathbb E\left[\left(a_2-a_4\right)^2\right]\\
&=& \frac{3}{8}\mathbb E\left[\left(a_1-a_3\right)^2\right]+\frac{3}{8}\mathbb E\left[\left(a_2-a_4\right)^2\right].
\end{IEEEeqnarray*}
Thus, we can take $A=B=\frac{5}{8}$ and $A_1=B_1=\frac{3}{8}\,$. Since $A_1+B_1=\frac{3}{4}<1=C_x+C_y$ and $2+A^2+B^2<\left(2+A_1+B_1\right)^2$ the weaker version of condition 2 in the theorem holds by remark $4\,$. However, $A+B=\frac{5}{4}>1=C_x+C_y$ and thus condition $2$ itself does not hold for this choice of $f_2\,$. Condition 3 is straightforward to show and follows from the expressions of the second order derivatives ($\frac{\partial ^2 f_2}{\partial t^2}=-\frac{1}{4}t^{-\frac{3}{2}}s^{\frac{1}{2}}$, $\frac{\partial ^2 f_2}{\partial t \partial s}=\frac{1}{4}t^{-\frac{1}{2}}s^{-\frac{1}{2}}$ and $\frac{\partial ^2 f_2}{\partial s^2}=-\frac{1}{4}t^{\frac{1}{2}}s^{-\frac{3}{2}}$). Note also that the partial derivatives of $f_2$ are not bounded, unlike the partial derivatives of $f_1\,$. By plugging in the above variance bound one can show (in a similar fashion as was done for $f_1$) that $\mathbb E\left[X_n\right]\geq 3^n\left[\mathbb E\left[X_0\right]-\frac{\sqrt{1.5}}{4}\frac{3}{3^2-2-\frac{1.49}{2}}\Var\left[X_0\right]\right]$ and thus $\mathbb E\left[X_n\right]\geq 1.46\times 3^n$.

I thank Itai Benjamini, Gady Kozma, Elliot Paquette and Igor Shinkar for helpful discussions.


\begin{thebibliography}{1}



\bibitem{chatterjee} S. Chatterjee, {\em Superconcentration and Related Topics}, Springer Monographs in Mathematics, Springer, Berlin-Heidelberg, 2014. 


\bibitem{hambley-kumagai} B. M. Hambly and T. Kumagai, {\em Diffusion on the scaling limit of the critical percolation cluster in the diamond hierarchical lattice}, Comm. Math. Phys., 295(1):29–69, 2010.


\bibitem{khristoforov} M. Khristoforov, V. Kleptsyn, M. Triestino, {\em Quantum gravity and (min, +)-type recursive distributional equations}, arxiv:1310.6116, 2013.

\bibitem{rossignol} R. Rossignol, {\em Noise-stability and central limit theorems for
effective resistance of random electric networks}, arXiv:1206.3856, 2012.
 
\end{thebibliography}
\end{document}